\documentclass[ECP,preprint]{ejpecp}

\usepackage{mathrsfs}

\SHORTTITLE{A Kolmogorov fourth-moment bound via a martingale core}
\TITLE{A Kolmogorov fourth-moment bound on Poisson chaos \\
via a martingale core}

\AUTHORS{Guangqu~Zheng\footnote{Department of Mathematics and
Statistics, Boston University, 665 Commonwealth Avenue,
Boston, MA 02215, USA. \EMAIL{gzheng90@bu.edu}}}

\KEYWORDS{Poisson chaos ; fourth moment theorem ;
Kolmogorov distance ; martingale core}
\AMSSUBJ{60F05 ; 60H07}
\AMSSUBJSECONDARY{60G55 ; 60H05}

\SUBMITTED{July 30, 2026}
\VOLUME{0}
\YEAR{2026}
\PAPERNUM{0}

\ABSTRACT{For any finite family of Poisson multiple integrals
and any finite $p\geq2$, we construct a common increasing
filtration generated by finitely many exact Poisson
counts such that the associated conditional
expectations converge in $L^p$, remain in their
original chaoses, and have bounded step kernels with
finite-measure support.  This finite-count martingale
core allows regular fixed-chaos identities and
estimates to be extended under the sole assumption of
a finite fourth moment.  In particular, if
$F$ lives in a   Poisson chaos with unit variance 
and finite fourth moment, 
we prove that the Kolmogorov distance 
between $F$ and a standard normal is bounded by 
$15.6\sqrt{\E[F^4]-3}$.
This removes Assumptions $\mathbf A$ and
$\mathbf A^{\textbf{loc}}$ from the Kolmogorov
bound of D\"obler and Peccati (\emph{Ann. Probab.}, 2018).  We also obtain
quantitative $L^4$ estimates for all iterated
Malliavin derivatives and, for $F$ in a Poisson chaos,
the fourth moment assumption of $F$
forces the $L^4$-integrability of its kernel.}

\newcommand{\E}{\mathbb{E}}
\newcommand{\PP}{\mathbb{P}}
\newcommand{\R}{\mathbb{R}}
\newcommand{\N}{\mathbb{N}}
\newcommand{\NN}{\mathcal{N}}
\newcommand{\ind}{\mathbf{1}}

\newcommand{\cG}{\mathcal{G}}
\newcommand{\cH}{\mathcal{H}}

\newcommand{\cZ}{\mathcal{Z}}
\newcommand{\cF}{\mathcal{F}}
\newcommand{\sZ}{\mathscr{Z}}
\newcommand{\bC}{\mathbb{C}^\eta}
\newcommand{\fin}{\mathbb{C}^{\bigstar}}
\newcommand{\AND}{\hspace{2mm}{\rm and}\hspace{2mm}}
\newcommand{\fH}{\mathfrak{H}}

\newcommand{\dK}{d_{\mathrm{Kol}}}
\newcommand{\dW}{d_{\mathrm W}}

\newcommand{\norm}[1]{\left\lVert #1\right\rVert}

\newcommand{\eps}{\varepsilon}
\newcommand{\noi}{\noindent}
\newcommand{\dl}{\delta}
\newcommand{\s}{\sigma}
\newcommand{\bul}{\bullet}
\newcommand{\wt}{\widetilde}
\newcommand{\dom}{\text{dom}}

\allowdisplaybreaks[2]

\hypersetup{
  pdftitle={A Kolmogorov fourth-moment bound on Poisson chaos  via a martingale core},
  pdfauthor={Guangqu Zheng}
}

\begin{document}

\section{Introduction}
\label{SEC1}

This paper is motivated by the problem of establishing a Kolmogorov bound 
for the fourth moment theorem on the Poisson space \cite{DP18,DVZ18} 
under the sole assumption of a finite fourth moment. 
We achieve this goal through an intrinsic martingale approximation (Theorem \ref{thm_core})
and our Kolmogorov bound (Theorem \ref{thm_kol}) is of the same order 
as the one in \cite{DP18} and the
 Wasserstein bound in \cite{DVZ18}.

\subsection{Fourth moment theorems on the Poisson space} \label{SEC_11}

The fourth moment phenomenon was first discovered in the Gaussian setting 
by David Nualart and Giovanni Peccati in their groundbreaking paper \cite{FMT}. 
Their result asserts that, for a sequence of normalized multiple Wiener-It\^o integrals 
of fixed order over a Gaussian field, 
convergence in distribution to a standard normal   
$N\sim\NN(0,1)$ is equivalent to convergence of 
the corresponding fourth moments to
$
\E[N^4]=3.
$
Since its introduction, the fourth moment theorem 
has become a widely used tool in stochastic analysis, 
especially when combined with its multivariate extension \cite{PT05}, 
for establishing central limit theorems for functionals of Gaussian fields.
Another major methodological breakthrough was achieved by Nourdin and Peccati
in \cite{NP09}, where Stein's method for normal approximation 
was combined with Malliavin calculus for the first time in this context. 
Among many other consequences, their approach yields Berry-Esseen-type bounds 
for a normalized multiple integral $F$ of the form
\begin{align}\label{nota1}
\textsf{dist}(F,N)
\leq C\sqrt{\kappa_4(F)},
\quad
\text{where
$\kappa_4(F):=\E[F^4]-3\E[F^2]^2$
and
$N\sim\NN(0,1)$,}
\end{align}
Here, $\textsf{dist}$ may denote several standard probability metrics, 
including 
the total variation distance, 
the Kolmogorov distance \eqref{def_dK}, 
the Wasserstein distance \eqref{def_dW}, 
and the Fortet-Mourier distance. 
The resulting Malliavin-Stein method was subsequently developed 
systematically in the monograph \cite{bluebook}.
See   \cite[Ch.~1]{Zheng18} for a brief historical account.

We now turn to the Poisson setting, 
where the corresponding problems are substantially more delicate. 
In this framework, the Malliavin derivative $D^+$ is essentially 
an add-one difference operator rather than a derivation, 
and consequently no diffusion-type chain rule is available.
The first systematic Poisson Malliavin--Stein
estimates were obtained in \cite{PSTU10,PZ10}.
The search for a general fourth-moment theorem on
the Poisson chaos remained unresolved until
D\"obler and Peccati \cite{DP18} settled the
problem by combining the discrete
Malliavin--Stein method, Mecke-type identities, and
Ledoux's spectral approach \cite{Led12}.
We collect  some important
earlier results in the following.

\begin{remark}\label{rem_prior_FMT}
Before the general result of D\"obler and Peccati
\cite{DP18}, fourth-moment theorems on the Poisson
space had been established in several important but
more restrictive settings.  For double Poisson
integrals, qualitative normal convergence criteria
in terms of kernel contractions were obtained in
\cite{PT08}.  Related results for the second and
third Poisson chaoses, under the regularity condition
\eqref{PB}, were proved in \cite{BP16}.  For multiple
integrals of arbitrary fixed order with nonnegative,
or more generally constant-sign, kernels,
quantitative normal approximation bounds were
developed in \cite{LRP13,ET14,Sch16}; in these
settings, the relevant contraction conditions can be
controlled by the fourth cumulant.  Finally, for
finite homogeneous sums in independent Poisson
random variables with intensity parameters bounded
away from zero, a universality principle and a
fourth-moment theorem were established in
\cite{PZ14}.
\end{remark}

Throughout this paper, we fix an arbitrary $\s$-finite measure space 
$(\cZ, \sZ, \mu)$ and let $\eta$ denote a Poisson random measure 
on $\cZ$ with control measure $\mu$, defined on a suitable probability space 
$(\Omega, \cF, \PP)$. 
Let $\bC_q$ denote the $q$-th Poisson chaos associated to $\eta$,
$D^+$ denote the add-one cost operator,  $L^2_s(\mu^q)$
denote the space of symmetric functions in $L^2(\cZ^q, \mu^q)$
and 
$I_q(f)$  denote a generic element of $\bC_q$.
We write $\| \bul\|_r$ for the $L^r(\Omega)$-norm
and let $\dW, \dK$ stand for the Wasserstein, Kolmogorov distances
respectively:

\noi
\begin{align}
\dW(X, Y) : &=      \sup_{\| h'\|_\infty \leq 1} |\E [ h(X)]  -\E[  h(Y) ] | 
\label{def_dW} \\
 \dK(X,Y) : &= \sup_{t\in\R} | \PP(X\leq t) - \PP(Y \leq t) |.
\label{def_dK} 
 \end{align}

\noi
 See Section \ref{SEC_21}
for further notations and background.

Now let us state the main results in \cite{DP18}.

\enlargethispage{2\baselineskip}

\begin{theorem}[\textsf{D\"obler--Peccati} {\cite[Theorem 1.3]{DP18}}] \label{thm_DP18}
Fix an integer $q\geq 1$ and consider  $F\in\bC_q$ with $\E[ F^2] =1$.
Recall the notation \eqref{nota1}.
Then, the following statements hold. 

\smallskip
\noi
{\rm (i)} Assume  $F$ satisfies Assumption $\mathbf{A}$
{\rm$\big($}i.e.,   $F\in L^4(\Omega)$ and
all of the random functions
$D^+F$, $FD^+F$, $(D^+F)^4$, and $F^3D^+F$
belong to  $L^1(\Omega\times\cZ,\PP\otimes\mu)${\rm$\big)$}.
Then,

\noi
\begin{align}\label{bdd_DP1}
\dW(F, N) \leq \Big( \sqrt{\frac{2}{\pi}} \frac{2q-1}{2q} + \frac{\sqrt{4q-3}}{\sqrt{q}} \Big)
\sqrt{\kappa_4(F)} \quad \text{with   $\kappa_4(F)\geq 0$.}
\end{align}

\noi
{\rm (ii)} Assume additionally that $F$ satisfies Assumption $\mathbf{A}^{\textbf{loc}}$ 
{\rm$\big($}i.e., there exists a set $Z_0\in\sZ$ with $\mu(\cZ\setminus Z_0) =0$
such that $D^+_zF$ verifies  Assumption  $\mathbf{A}$
for every fixed $z\in Z_0${\rm$\big)$}.
Then, 

\noi
\begin{align}\label{bdd_DP2}
\dK(F, N) 
&\leq  \min\big\{15.6,     \,\,  11 + 2^{\frac32} \big(   \|F\|_4^2 + \| F\|_4   \big)    \big\}
\sqrt{\kappa_4(F)} \\
&= 15.6 \sqrt{\kappa_4(F)}. \notag
\end{align}

\end{theorem}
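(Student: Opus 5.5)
This is a cited theorem of Döbler and Peccati, so the plan is to reconstruct their argument: Stein's method combined with the Poisson Malliavin calculus and the spectral structure of $\bC_q$. Assumptions $\mathbf A$ and $\mathbf A^{\textbf{loc}}$ serve only to justify the integrations by parts that appear along the way.

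\emph{Step 1: Stein reduction.} For a test function $h$, let $f_h$ solve the Stein equation $f'(x)-xf(x)=h(x)-\E h(N)$. Since $F=-L^{-1}\delta D^+F$ and $-D^+L^{-1}F=q^{-1}D^+F$ on $\bC_q$, the Poisson integration-by-parts formula gives
\[
\E[Ff(F)]=\frac1q\,\E\int_{\cZ} D^+_zf(F)\,D^+_zF\,\mu(dz).
\]
Here Assumption $\mathbf A$ makes $FD^+F$ and $D^+F$ integrable, which legitimizes the duality.

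\emph{Step 2: Wasserstein bound (i).} Write $D^+_zf(F)=f'(F)D^+_zF+R_z$, where $|R_z|\le \frac12\|f''\|_\infty (D^+_zF)^2$. This gives
\[
|\E h(F)-\E h(N)|\le \E\Big|1-\tfrac1q\Gamma\Big|+\frac{\|f''\|_\infty}{2q}\,\E\int |D^+_zF|^3\,\mu(dz),
\]
with $\Gamma=\int (D^+_zF)^2\mu(dz)$. The core of the proof is to bound both terms by $\sqrt{\kappa_4(F)}$.

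For the first term, use the carré du champ $\Gamma_0(F,F)=\frac12(L+2q)F^2$ on the Poisson space, which includes a correction coming from the difference structure. Expand $F^2$ into chaoses of order at most $2q$ and apply Ledoux's spectral argument, namely $\E[F^2(L+2q)F^2]\ge 0$ type inequalities. This yields $\mathrm{Var}(\frac1q\Gamma)\le \frac{4q-3}{q}\,\kappa_4(F)$, up to the Döbler--Peccati identity
\[
\E[F^4]-3=\frac{3}{2q}\E\int(D^+F)^4\,d\mu+\text{nonnegative terms},
\]
which is obtained from Mecke's formula. This identity is exactly where $(D^+F)^4,F^3D^+F\in L^1$ is required.

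For the second term, apply Cauchy--Schwarz to get $\E\int|D^+F|^3\le (\E\int (D^+F)^2)^{1/2}(\E\int(D^+F)^4)^{1/2}$. Using $\E\int(D^+F)^2=q$ and $\E\int(D^+F)^4\le \frac{2q}{3}\kappa_4$, then collecting constants, gives \eqref{bdd_DP1}. The conclusion $\kappa_4\ge0$ follows from the same identity.

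\emph{Step 3: Kolmogorov bound (ii).} Now $f_h$ is only Lipschitz with a jump in $f'$, so the Taylor remainder cannot be controlled by $\|f''\|_\infty$. Instead, follow Eichelsbacher--Thäle and Schulte. Write
\[
D^+_zf(F)-f'(F)D^+_zF=\int_0^{D^+_zF}\big(f'(F+u)-f'(F)\big)\,du,
\]
and use $f'(x)=xf(x)+\ind_{x\le t}-\PP(N\le t)$. The $xf$ part is Lipschitz and yields terms like $\E\int(D^+F)^2|F|\,|D^+F|$, which are bounded by $\|F\|_4$ times fourth-moment quantities; this produces the constant $2^{3/2}(\|F\|_4^2+\|F\|_4)$. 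The indicator part gives terms of the form $\E\int D^+_zF\,\ind\{\cdot\}\,(\ldots)$. To handle these, rewrite them through a second integration by parts in which the derivative falls on $D^+_zF$, which is itself in $\bC_{q-1}$. This is where Assumption $\mathbf A^{\textbf{loc}}$ enters, since $D^+_zF$ must satisfy $\mathbf A$ for a.e. $z$ so that the duality and Mecke identities apply to it. One then bounds $\E\int\int (D^+_yD^+_zF)^2(D^+_zF)^2$ by $C\kappa_4(F)$ via the iterated version of the fourth-moment identity.

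\emph{Step 4: Absolute constant.} To obtain the absolute constant $15.6$, note that $\|F\|_4^4=3+\kappa_4$. If $\kappa_4\ge c$, the bound is trivial because $\dK\le1$; otherwise $\|F\|_4$ is bounded, and optimizing $c$ makes the minimum equal to $15.6$.

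The main obstacle is Step 3, specifically controlling the indicator remainder by $\sqrt{\kappa_4}$ through the second-order difference operator, and keeping the constants explicit throughout.
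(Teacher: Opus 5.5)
The paper does not reprove this theorem. It is quoted from \cite{DP18}, and the only thing the paper itself checks is the closing equality, which is pure arithmetic. Since $\|F\|_4\ge\|F\|_2=1$, the second entry of the minimum is at least $11+2^{5/2}\approx16.66>15.6$, so the minimum is always $15.6$. Your Step~4 gets exactly this point wrong. Split according to whether $\kappa_4(F)\ge c$, and use $\dK\le1$ and $\|F\|_4^4=3+\kappa_4(F)$. The resulting constant is $\max\{c^{-1/2},\,11+2^{3/2}((3+c)^{1/2}+(3+c)^{1/4})\}$. Its second entry never drops below $16.66$, and is about $19.6$ once $\kappa_4(F)\ge0$ is used. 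So no choice of $c$ gives $15.6$. The value $15.6$ must come from a separate, $\|F\|_4$-free estimate in \cite{DP18}. Nothing in your Step~3 produces such an estimate, so your outline does not reach \eqref{bdd_DP2}.

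Step~2 also rests on a false inequality. Take $q=1$ and $F=I_1(f)$ with $\|f\|_{L^2(\mu)}=1$. Then $D^+_zF=f(z)$ and $\kappa_4(F)=\int f^4\,d\mu$, hence $\E\int(D^+_zF)^4\mu(dz)=\kappa_4(F)$. This contradicts both your bound $\E\int(D^+F)^4\,d\mu\le\frac{2q}{3}\kappa_4(F)$ and your identity $\E[F^4]-3=\frac{3}{2q}\E\int(D^+F)^4\,d\mu+(\text{nonnegative})$.

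The correct relation is \eqref{D4_id}: $\frac{1}{2q}\E\int(D^+F)^4\,d\mu=\frac3q\E[F^2\Gamma(F,F)]-\E[F^4]$. Write $J_r$ for the projection onto $\bC_r$. Then $\Gamma(F,F)=\frac12\sum_{r=0}^{2q}(2q-r)J_r(F^2)$, and $\E[J_{2q}(F^2)^2]\ge2$ gives $\kappa_4(F)\ge\sum_{r=1}^{2q-1}\E[J_r(F^2)^2]\ge0$. From these one gets $\E\int(D^+F)^4\,d\mu\le(4q-3)\kappa_4(F)$ and $\operatorname{Var}(\Gamma(F,F)/q)\le(\frac{2q-1}{2q})^2\kappa_4(F)$. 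Combined with $\E\int(D^+F)^2\,d\mu=q$, Cauchy--Schwarz, $\|f_h'\|_\infty\le\sqrt{2/\pi}$ and $\|f_h''\|_\infty\le2$, these are exactly what produce the two coefficients in \eqref{bdd_DP1}. Your constants would give $\sqrt{2/3}$ in place of $\sqrt{(4q-3)/q}$.

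There is a further mismatch. The Stein bound you wrote involves $\frac1q\int(D^+F)^2\,d\mu$, whereas Ledoux's spectral argument controls only $\frac1q\Gamma(F,F)$. Here $2\Gamma(F,F)=\int(D^+F)^2\,d\mu+\int(D^-F)^2\,d\eta$, with $D^-_xF=F(\eta-\delta_x)-F(\eta)$. To bridge the two, you must first symmetrize the integration by parts with Mecke's formula, averaging its $D^+$-against-$\mu$ and $D^-$-against-$\eta$ forms. This step, together with the pathwise representation of $\Gamma$, is where Assumption $\mathbf A$ is genuinely used.
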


Observe that the multiplicative factor in \eqref{bdd_DP2} reduces to
$15.6$. Indeed, since
$\|F\|_4\geq \|F\|_2=1$, we have
$
11+2^{\frac32}
\bigl(\|F\|_4^2+\|F\|_4\bigr)
\geq
11+2^{\frac52}
\approx 16.6569.$
%
%
Moreover, the case $\kappa_4(F) =0$ cannot occur
by
\cite[Prop. 1.6]{DP18}; see  
\cite[Rem. 5.2]{DVZ18} and \eqref{intro_kernel} below.

D\"obler and Peccati themselves pointed out that Assumptions
$\mathbf A$ and  
$\mathbf A^{\textbf{loc}}$ appear to be artifacts of the techniques
employed in \cite{DP18}, rather than intrinsic conditions for the
fourth moment phenomenon. More precisely, these assumptions were
used to justify the relevant Mecke-type identities and the
almost-sure representations of the Malliavin and
carr\'e-du-champ operators appearing in their proofs. 
Nevertheless, Theorem \ref{thm_DP18} provided the first general fourth moment bound in
Wasserstein and Kolmogorov distances for multiple   integrals on the Poisson space.
As observed in \cite[Rem. 1.2]{DP18}, Assumptions
$\mathbf A$ and $\mathbf A^{\textbf{loc}}$ are satisfied whenever

\noi
  \begin{align} \label{PB}
  \begin{rcases}
  &\text{${\displaystyle F= \sum_{\ell\leq M} I_\ell(f_\ell)}$, where 
  $M <\infty$ and each kernel  $f_\ell\in L^2_s(\mu^\ell)$ is  bounded}\\[-0.3em]
 &\text{with support   contained in $A^\ell$
  for some $A\in\sZ$ with $\mu(A) <\infty$.}
  \end{rcases}
 \end{align}
The condition \eqref{PB} already appeared in the work \cite{BP16}
by Bourguin and Peccati.
Soon after the appearance of the work \cite{DP18}, 
D\"obler, Vidotto, and the author \cite{DVZ18} 
  used an infinitesimal exchangeable-pairs construction
 to establish univariate and multivariate
fourth-moment bounds under the sole assumption of finite fourth
moments. 
They obtained the following Wasserstein bound  
 with a slightly better multiplicative factor.

\begin{theorem}[\textsf{D\"obler--Vidotto--Zheng} {\cite[Theorem 1.2 and 
Corollary 1.3]{DVZ18}}] \label{thm_DVZ18}
Fix an integer $q\geq 1$ and consider  $F\in\bC_q\cap L^4(\Omega)$ with $\E[ F^2] =1$.
Then,  with the notation \eqref{nota1},

\noi
\begin{align}\label{bdd_DVZ}
\dW(F, N) 
\leq 
\Big( \sqrt{\frac{2}{\pi}} \frac{2q-1}{2q} 
+ \frac{2}{3} \frac{\sqrt{4q-3}}{\sqrt{q}} 
\Big)
\sqrt{\kappa_4(F)}.
\end{align}
In particular, if $F_n\in\bC_{q_n}$ satisfy $(\E[ F^2_n], \E[ F^4_n])  \to (1, 3)$,
then $F_n\to N$ in law.
\end{theorem}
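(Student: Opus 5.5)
The plan is to reduce to the regular case \eqref{PB} by an approximation that keeps us inside the $q$-th chaos, and then pass to the limit. Since the whole point is to need nothing beyond $F\in\bC_q\cap L^4(\Omega)$, I would use the finite-count martingale core announced in the abstract (Theorem \ref{thm_core}) with $p=4$ and the single-element family $\{F\}$. It gives an increasing filtration $(\cG_n)_{n\ge1}$, generated by finitely many exact Poisson counts, such that the following hold for $F_n:=\E[F\,|\,\cG_n]$:
\begin{itemize}
\item $F_n\in\bC_q$;
\item $F_n=I_q(f_n)$ with $f_n$ bounded and supported in $A_n^q$ for some $A_n$ with $\mu(A_n)<\infty$;
\item $F_n\to F$ in $L^4(\Omega)$.
\end{itemize}
In particular each $F_n$ satisfies \eqref{PB}. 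By \cite[Rem.~1.2]{DP18}, $F_n$ therefore satisfies Assumptions $\mathbf A$ and $\mathbf A^{\textbf{loc}}$. All Mecke-type identities and pathwise representations of $D^+$ and of the carr\'e-du-champ operator are then legitimate for $F_n$.

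Step one is the bound \eqref{bdd_DVZ} for regular $F_n$. Normalize $G_n:=F_n/\|F_n\|_2$; this is possible because $\|F_n\|_2\to1$, so $\|F_n\|_2>0$ for large $n$. For $G_n$ I would run the infinitesimal exchangeable-pairs argument of \cite{DVZ18}. One obtains an exchangeable pair $(G_n,G_n^t)$ by thinning and resampling the Poisson points independently with probability $t$. This gives
\[
\E[G_n^t-G_n\,|\,\eta]\approx -tq\,G_n
\quad\text{and}\quad
\E[(G_n^t-G_n)^2\,|\,\eta]\approx 2t\,\Gamma(G_n,G_n),
\]
together with a third-moment remainder. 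The first two terms yield the Stein bound $\sqrt{2/\pi}\,\E|1-q^{-1}\Gamma(G_n,G_n)|$. The fourth-moment estimate $\mathrm{Var}(q^{-1}\Gamma(G_n,G_n))\le \frac{(2q-1)^2}{4q^2}\kappa_4(G_n)$, from the spectral argument of \cite{DP18}, which is valid under $\mathbf A$, controls this term. The third-moment remainder is bounded via $\E[(G^t_n-G_n)^4]$ and yields the term $\frac23\sqrt{(4q-3)/q}\,\sqrt{\kappa_4(G_n)}$. If one only wants the weaker constant, it suffices to quote \eqref{bdd_DP1} directly for $G_n$. The sharper $\frac23$ requires redoing the DVZ computation, but now every interchange of limits $t\downarrow0$ is justified by boundedness and finite support of $f_n$.

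Step two is to pass to the limit. Since $G_n\to F$ in $L^4$, and hence in $L^1$, the $1$-Lipschitz test functions in \eqref{def_dW} give
\[
\dW(F,N)\le \dW(G_n,N)+\E|G_n-F|,
\]
and the last term tends to $0$. Moreover, $\E[G_n^4]\to\E[F^4]$ and $\E[G_n^2]=1$, so $\kappa_4(G_n)\to\kappa_4(F)$. Letting $n\to\infty$ gives \eqref{bdd_DVZ} for $F$, and in particular $\kappa_4(F)\ge0$ as a limit of nonnegative numbers.

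The ``in particular'' claim then follows. If $\E[F_n^2]\to1$ and $\E[F_n^4]\to3$, normalize to unit variance and apply \eqref{bdd_DVZ}. The coefficient is bounded uniformly in $q$ by $\sqrt{2/\pi}+\frac43$, so $\dW\to0$ regardless of how the $q_n$ vary. Since $\dW$-convergence implies convergence in law, $F_n\to N$ in law.

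The main obstacle is the existence and properties of the martingale core itself, namely that conditioning on finitely many exact counts preserves the chaos order while producing \eqref{PB}-regular kernels that converge in $L^4$, not merely in $L^2$. Here I take this from Theorem \ref{thm_core}. Granting it, the only delicate step is checking that the DVZ exchangeable-pair computation closes for bounded, finitely supported kernels. That check is routine once the regularity holds, since all moments of $D^+$-iterates are then finite.
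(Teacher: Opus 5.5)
Your reduction is sound, but it is not how the statement enters the paper. The paper gives no proof: it imports the bound from \cite{DVZ18}. There the infinitesimal exchangeable-pair argument (thinning with retention probability $e^{-s}$ plus independent resampling at intensity $(1-e^{-s})\mu$, so that $\E[F(\eta^s)\,|\,\eta]=e^{-qs}F$, then $s\downarrow0$) is run directly on $F\in\bC_q\cap L^4(\Omega)$, with no regularization. You instead regularize with Theorem~\ref{thm_core} and pass to the limit, which is the scheme the paper uses for Theorem~\ref{thm_kol}. For $\dW$ the limit step is easier than in the paper, because $|\dW(F,N)-\dW(G_n,N)|\le\E|F-G_n|$, so no analogue of Lemma~\ref{lem_lsc} is needed.

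What your route buys: $G_n$ is a degree-$q$ polynomial in finitely many independent Poisson counts, on which the pair acts as a reversible immigration--death chain. Every $s\downarrow0$ limit therefore becomes a finite-dimensional polynomial computation. By contrast, \cite{DVZ18} must justify these limits under fourth moments only, including that $F^2$ has a finite chaos expansion so that $L(F^2)$ and $\Gamma(F,F)$ make sense.

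What it costs: all of the content sits in your Step one, which you only sketch. If Step one is read as ``apply \cite{DVZ18} to $G_n$'', the reduction is vacuous, since that theorem already applies to $F$ itself. So you should spell out where the factor $\frac23$ comes from. Set $\Delta:=G_n(\eta^s)-G_n$ and $\lambda:=1-e^{-qs}$. The exchangeable-pair Wasserstein bound has remainder
$\frac{1}{3\lambda}\E|\Delta|^3\le\frac{1}{3\lambda}(\E[\Delta^2])^{1/2}(\E[\Delta^4])^{1/2}$, where $s^{-1}\E[\Delta^2]\to2q$. Expanding $\Delta^4$ using exchangeability, $\E[G_n^3\,G_n(\eta^s)]=e^{-qs}\E[G_n^4]$ and $\E[G_n^2\,G_n(\eta^s)^2]=\E[G_n^2\,e^{sL}(G_n^2)]$, you get
\[
\frac1s\E[\Delta^4]\longrightarrow12\,\E\bigl[G_n^2\Gamma(G_n,G_n)\bigr]-4q\,\E[G_n^4]
=2\int_\cZ\E\bigl[|D_zG_n|^4\bigr]\mu(dz)\le2(4q-3)\,\kappa_4(G_n).
\]
The equality and the inequality here are \eqref{D4_id} and \eqref{reg_D4}, both valid for $G_n\in\fin_q$ by \cite[Lemma~3.2]{DP18}. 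Since $\frac{1}{3q}\sqrt{2q}\,\sqrt{2(4q-3)}=\frac23\sqrt{(4q-3)/q}$, this is exactly the second term of \eqref{bdd_DVZ}.

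The rest of your proposal is fine: the variance bound for $q^{-1}\Gamma(G_n,G_n)$, the $L^1$ limit, $\kappa_4(G_n)\to\kappa_4(F)$, and the uniform-in-$q$ constant for the ``in particular'' claim.
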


The derivation of the bound \eqref{bdd_DVZ} relies crucially on
 the regularity properties of the solution to the Stein equation 
 associated with the Wasserstein distance \eqref{def_dW}. 
 Consequently, the arguments developed in \cite{DVZ18} 
 do not directly yield a Kolmogorov bound of order $\sqrt{\kappa_4(F)}$. 
In general, within Stein's method, obtaining bounds in
Kolmogorov distance is   more difficult than obtaining
Wasserstein bounds.  This is mainly because the Kolmogorov distance
involves discontinuous indicator test functions, whereas the
Wasserstein distance is based on Lipschitz test functions and therefore
benefits from better regularity of the associated Stein solutions; see,
e.g., \cite{CGS11}.  As a result, Kolmogorov bounds often
require additional smoothing or concentration arguments and may incur
a loss in the convergence rate.
As already pointed out in \cite[Rem.~1.4]{DVZ18}, 
 the standard smoothing argument leads only to a rate of order $\kappa_4(F)^{1/4}$.


\subsection{Main result: \textsf{a martingale core}} \label{SEC_12}

As already mentioned below Theorem \ref{thm_DP18},
if a random variable $F\in\bC_q$ satisfies the condition \eqref{PB},
Assumptions $\mathbf{A}$ and $\mathbf{A}^{\textbf{loc}}$
hold. In this regular case, the Kolmogorov bound \eqref{bdd_DP2} holds.
It is then very natural to seek  a regular approximation
 that converges in $L^4(\Omega)$
 so that relevant fourth moment estimates can be passed along 
 this regular approximation. 
Martingale convergence provides precisely this topology.  
However, an arbitrary Doob martingale does not preserve the Poisson chaos
decomposition. 

\begin{example} \label{ex1}
\rm
Let $\eta$ be a Poisson random
measure on $[0,1]$ with Lebesgue control measure, and consider
$
F
:=
\eta([0,1])-1
\in\bC_1.$
For $n\geq1$, let
$
A_{n,k}
:=
\big(   \frac{k-1}{2^n}, \frac{k}{2^n}  \bigr],
$
$1\leq k\leq 2^n,
$
and introduce the occupancy variables and a filtration:
\[
\xi_{n,k}
:=
\ind_{\{\eta(A_{n,k})\geq1\}}
\quad
{\rm and}
\quad
\cH_n
:=
\s\big\{
\xi_{n,k}:1\leq k\leq2^n
\big\}, \,\,
n\geq 1.
\]
Since the dyadic partitions are nested, $(\cH_n)_{n\geq1}$ is an
increasing filtration.  Moreover, the Poisson process is almost surely
simple (\cite[Proposition 6.9]{LP18}), and hence
$\s\big( 
\bigcup_{n\geq1}\cH_n \big)
=
\s\{\eta\}$
up to null sets.
Consequently, we have the following martingale approximation:
for every finite $p \geq 1$,
\[
F_n
:=
\E[F | \cH_n]
\xrightarrow[\text{in $L^p(\Omega)$}]{n\to+\infty} F.
\]

\noi
However, with $\eta([0,1]) =  \sum_{k=1}^{2^n} \eta(A_{n,k})$,
we can write

\noi
\begin{align*}  
\begin{aligned} 
F_n
= -1 &+   \sum_{k=1}^{2^n} 
 \E\big[ \eta(A_{n,k} ) |   \eta(A_{n,k} ) \geq 1 \big] \ind_{\{  \eta(A_{n,k} ) \geq 1  \} }  
 =\frac{a_n}{p_n}
\sum_{k=1}^{2^n}
\bigl(\xi_{n,k}-p_n\bigr),
\end{aligned}
\end{align*}
where
$
a_n:=2^{-n}$,
$p_n:=1-e^{-a_n}$,
and we used the elementary fact that $\E[ X | X\geq 1] = \frac{\lambda}{1 - e^{-\lambda}}$
for $X\sim\text{Poisson}(\lambda)$.
For $z\in A_{n,k}$ and
$z_1,z_2\in A_{n,k}$, one has
\[
D_z^+F_n
= \frac{a_n}{p_n} \ind_{ \{  \eta(A_{n,k}) =0 \}}
\AND
 D^+_{z_1}D^+_{z_2}F_n 
=
- \frac{a_n}{p_n} \ind_{ \{  \eta(A_{n,k}) =0 \}}.
\]
Hence,
Stroock's formula \eqref{for1}   shows that 
 $F_n$ does not
belong to a single Poisson chaos.

\end{example} 

Here is another  naive  martingale approximation by  spatial truncations. 

\begin{example}  \label{ex2} \rm
 Let
$(Z_n)_{n\geq1}$ be an increasing sequence in $\sZ$ such that
$\mu(Z_n)<\infty$
and
$Z_n\uparrow \cZ$.
Set
$
\mathcal F_n^{\mathrm{sp}}
:=
\sigma\big\{
\eta(B):B\in\sZ,\ B\subseteq Z_n
\big\}.
$
For $F=I_q(f)\in L^4(\Omega)$, independence of the restrictions of
the Poisson measure gives
$
\wt{F}_n
:=
\E[F | \cF_n^{\mathrm{sp}}]
=
I_q\bigl(f\ind_{Z_n^q}\bigr).
$
Spatial conditioning does preserve the chaos order,
but it does not regularize an unbounded kernel. 
Direct step-kernel approximation has the opposite defect: it gives regularity and 
$L^2$-convergence, but not the $L^4$-convergence supplied by a martingale.

\end{example}

The problem is
therefore to construct a filtration whose conditional expectations
simultaneously preserve the chaos order, regularize the kernels,
and converge in $L^4(\Omega)$.  
This is precisely the purpose of the
finite-count martingale core introduced below.

\begin{definition}[\textsf{Finite-partition core}]
\label{def_core}
For an integer $q\geq1$, let $\fin_q$ consist of all $F\in\bC_q$
with the property \eqref{PB}.
 We also put $\fin_0 = \bC_0$ as a convention.
\end{definition}

In view of the product formula \cite{Surg84, DPF18},
every element of $\fin_q$ has moments of all orders and
satisfies Assumptions $\mathbf A$ and $\mathbf{A}^{\textbf{loc}}$ of
\cite{DP18}.  The next result is the main approximation theorem of the
paper.

\begin{theorem}[\textsf{Simultaneous fixed-chaos martingale core}]
\label{thm_core}
Fix $p\in[2,\infty)$ and let
\[
  F_j=I_{q_j}(f_j)\in L^p(\Omega),
  \qquad j=1,\ldots,d,
\]
where $d\geq1$, $q_j\geq1$, and
$f_j\in L_s^2(\mu^{q_j})$.  There exist increasing finite-dimensional
subspaces $H_n\subset L^2(\mu)$, their orthogonal projections $P_n$,
and increasing $\s$-algebras
\begin{align}
  \cG_n
  =\sigma\big\{ \eta(A_{n,1}),\ldots,\eta(A_{n,m_n})\big\},
  \label{Gn_def}
\end{align}
where the sets $\{A_{n,k}: 1\leq k \leq m_n\}$ are pairwise disjoint 
with $\mu( A_{n,k})\in(0, \infty)$
such that
\begin{align}
  F_{j,n}
  :=\E[F_j|\cG_n]
  =I_{q_j}\bigl(P_n^{\otimes q_j}f_j\bigr)
  \in\fin_{q_j}.
  \label{core_id}
\end{align}
Moreover, for any $j\in\{1, ...,d\}$,
we have
\begin{align}
  F_{j,n}\xrightarrow[\text{in }L^p(\Omega)]{n\to+\infty} F_j.
  \label{core_Lp}
\end{align}

\end{theorem}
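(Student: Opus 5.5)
The plan has three parts: compute the conditional expectation explicitly as a projection of the kernel, build nested partitions so the resulting $\sigma$-algebras increase to $\sigma\{\eta\}$, and then let martingale convergence give the $L^p$ limit.

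\textbf{Step 1: the conditional expectation.} Let $\mathcal{A}=\{A_1,\dots,A_m\}$ be pairwise disjoint sets with $\mu(A_k)\in(0,\infty)$. Take $H$ to be the span of $\ind_{A_1},\dots,\ind_{A_m}$ in $L^2(\mu)$, with orthogonal projection $P$. Set $\cG=\sigma\{\eta(A_k):k\le m\}$. I will show
\[
\E[I_q(f)\,|\,\cG]=I_q(P^{\otimes q}f).
\]
For a kernel of the form $f=g_1\otimes\cdots\otimes g_q$ with bounded $g_i$ supported in a finite-measure set, and then by density, it suffices to test against the exponential family $\exp(\sum_k t_k\eta(A_k))$, $t_k\in\R$. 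These variables generate $\cG$ and are total in $L^2(\cG)$.

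Write $h=\sum_k (e^{t_k}-1)\ind_{A_k}$, which lies in $H$. By the chaos expansion of Poisson exponentials, $\exp(\sum_k t_k\eta(A_k))$ equals a constant times $\sum_n I_n(h^{\otimes n})/n!$. The isometry then gives
\[
\E\Bigl[I_q(f)\,e^{\sum_k t_k\eta(A_k)}\Bigr]=c\,\langle f,h^{\otimes q}\rangle=c\,\langle P^{\otimes q}f,h^{\otimes q}\rangle,
\]
because $h\in H$. The variable $I_q(P^{\otimes q}f)$ is $\cG$-measurable, since it is a polynomial in the $\eta(A_k)$: the kernel $P^{\otimes q}f$ is a finite combination of $\ind_{A_{k_1}}\otimes\cdots\otimes\ind_{A_{k_q}}$, and the product formula applies. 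This proves the identity in \eqref{core_id}.

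The kernel $P^{\otimes q}f$ is a step function on finitely many rectangles of finite measure. It is therefore bounded and supported in $A^q$ with $A=\bigcup_k A_k$, so $I_q(P^{\otimes q}f)\in\fin_q$. The construction is also simultaneous in $j$, since one partition works for all $d$ kernels at once.

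\textbf{Step 2: nested partitions generating $\sigma\{\eta\}$.} I choose partitions $\mathcal{A}_n$ that refine one another, so that each set of $\mathcal{A}_n$ is a union of sets of $\mathcal{A}_{n+1}$, up to null sets. Then $H_n$ increases and the $\cG_n$ increase.

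Fix $Z_n\uparrow\cZ$ with $\mu(Z_n)<\infty$. Since each $L^2(\mu^{q_j})$ is separable on the $\sigma$-algebra generated by the $f_j$, I work with a countable generating family $\{B_i\}$ of sets of finite measure. This family generates a $\sigma$-algebra $\sZ_0$ with respect to which all $f_j$ are measurable, after modifying them on null sets. Let $\mathcal{A}_n$ be the atoms of the finite algebra generated by $B_1,\dots,B_n,Z_1,\dots,Z_n$, intersected with $Z_n$, with null atoms discarded. These atoms are disjoint and have positive finite measure.

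Two consequences follow. First, $\bigcup_n H_n$ is dense in $L^2(\cZ,\sZ_0,\mu)$, so $P_n^{\otimes q}f_j\to f_j$ in $L^2$ and $F_{j,n}\to F_j$ in $L^2(\Omega)$. Second, the conditional expectations form a Doob martingale, so the $L^2$ limit identifies $\E[F_j|\cG_\infty]=F_j$ with $\cG_\infty=\sigma(\bigcup_n\cG_n)$.

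\textbf{Step 3: upgrade to $L^p$.} Since $F_j\in L^p(\Omega)$ with $p<\infty$, the martingale $\E[F_j|\cG_n]$ converges in $L^p(\Omega)$ to $\E[F_j|\cG_\infty]$, by Lévy's upward theorem together with the $L^p$ martingale convergence theorem. By Step 2 that limit is $F_j$, which gives \eqref{core_Lp}. It is essential here that the $F_{j,n}$ are conditional expectations and not merely $L^2$ approximations; this is exactly the defect of Example \ref{ex2} that the construction avoids.

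\textbf{Main obstacle.} I expect Step 1 to be the delicate part. Two things need care. One is justifying the exponential-moment chaos expansion when the $t_k$ are negative, which requires $h>-1$; I will restrict to $t_k$ in an interval, which still determines $L^2(\cG)$ by analyticity. The other is the measurability reduction in Step 2 to the countably generated $\sZ_0$ when $\sZ$ itself is not countably generated.
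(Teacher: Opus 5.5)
Your proof is correct. Its architecture matches the paper: you show the conditional expectation is $I_q(P^{\otimes q}f)$, build nested cell partitions with $P_n^{\otimes q_j}f_j\to f_j$ in $L^2$, identify $\E[F_j|\cG_\infty]=F_j$ through the $L^2$ limit, and then apply $L^p$ martingale convergence. However, the two main ingredients are obtained differently.

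\textbf{Step 1.} The paper uses completeness of the tensor Charlier polynomials to get $L^2(\cG_n)=\bigoplus_r I_r(\fH_n^{\odot r})$. It then reads off $\E[\,\cdot\,|\cG_n]$ as the second quantization $\textsf{Exp}(P_n)$. You instead test against $e^{\sum_k t_k\eta(A_k)}$, whose chaos kernels are $c\,h^{\otimes n}/n!$ with $h\in H$; the identity then follows from the isometry and the self-adjointness of $P^{\otimes q}$. Your route is more self-contained. It needs only totality of these exponentials in $L^2(\cG)$, which follows from the identity theorem for the generating function $s\mapsto\sum_{\alpha\in\N_0^m}\E\bigl[Y\ind_{\{\eta(A_k)=\alpha_k,\,k\leq m\}}\bigr]\prod_k s_k^{\alpha_k}$, since the counts are $\N_0$-valued. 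The paper's route gives the full Fock-space description of $L^2(\cG_n)$ directly.

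\textbf{Your two obstacles both dissolve.}
\begin{itemize}
\item The condition $h>-1$ is automatic: $h=\sum_k(e^{t_k}-1)\ind_{A_k}>-1$ for every real $t_k$. Indeed, $D^{(n)}_{z_1,\ldots,z_n}e^{\sum_k t_k\eta(A_k)}=e^{\sum_k t_k\eta(A_k)}\prod_i h(z_i)$, so Stroock's formula gives the expansion for all $t$, with no restriction to an interval and no analyticity argument.
\item The measurability reduction can be avoided altogether, as the paper does. First pick symmetric rectangular step functions $u_{j,n}$ with $\|u_{j,n}-f_j\|\leq 1/n$. Build the partitions from their countably many coordinate sets. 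Then conclude by best approximation, $\|P_n^{\otimes q_j}f_j-f_j\|\leq\|u_{j,n}-f_j\|$.
\end{itemize}

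\textbf{Step 2.} Your version also works. It uses a countably generated $\sZ_0$ making every $f_j$ $\sZ_0^{\otimes q_j}$-measurable, density of $\bigcup_n H_n$ in $L^2(\cZ,\sZ_0,\mu)$, and strong convergence of increasing projections. The needed lemma is standard, since each set of $\sZ^{\otimes q}$ lies in the $\sigma$-field generated by countably many measurable rectangles. Your approach gives the extra fact $P_n^{\otimes q}g\to g$ for all $g\in L^2(\sZ_0^{\otimes q})$, which the theorem does not need.

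Two minor points. Your Step 2 heading promises $\cG_n\uparrow\sigma\{\eta\}$. Your construction does not deliver this, and it is not needed: $\E[F_j|\cG_\infty]=F_j$ comes from the $L^2$ limit, exactly as in the paper. Also, you should note that $P^{\otimes q}$ commutes with permutations, so the kernel $P_n^{\otimes q_j}f_j$ is symmetric, as the definition of $\fin_{q_j}$ requires.
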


In Example \ref{ex1}, 
the occupancy-based approximation mixes different Poisson chaoses, 
whereas in Example \ref{ex2}, the naive spatial truncation fails to 
satisfy the regularity condition \eqref{PB}. 
By contrast, Theorem \ref{thm_core} shows that 
conditioning on exact cell counts both preserves the chaos order 
and yields the desired regularity.
We present the proof of Theorem \ref{thm_core}
in Section \ref{SEC_22}.

\subsection{Applications} \label{SEC_13}
 Now let us present a few consequences of Theorem \ref{thm_core}.
The first application of the martingale core is a hierarchy
of $L^4$ estimates for iterated Malliavin derivatives.

\begin{theorem}\label{app1}
Let  $F\in\bC_q\cap L^4(\Omega)$ for some $q\geq 1$,
and let
$D^rF$ denote the $r$-th 
add-one cost derivative as in \eqref{def_MD}.
Then, we have $D^r F\in L^4(\Omega\times \cZ^r)$ for every $1 \leq r \leq q$
with

\noi
\begin{align}
  \int_{\cZ^r}\E\bigl[|D^r_{z_1, ..., z_r}F|^4\bigr]
  \,\mu(dz_1)\cdots \mu(dz_r)
  \leq a_{q,r}\kappa_4(F),
  \label{intro_der}
\end{align}
where 
$a_{q,r}
  :=\prod_{j=0}^{r-1}\big(4(q-j)-3\big).$
In particular, 

\noi
\begin{align}
  (q!)^4\norm{f}_{L^4(\mu^q)}^4
  \leq
  \Big(\prod_{j=1}^q(4j-3)\Big)\kappa_4(F).
  \label{intro_kernel}
\end{align}
\end{theorem}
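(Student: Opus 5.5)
The plan is to prove \eqref{intro_der} first for regular $F\in\fin_q$, and then pass to a general $F\in\bC_q\cap L^4(\Omega)$ through the martingale core of Theorem~\ref{thm_core}.

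\emph{Regular case, $r=1$.} For $G=I_q(g)\in\fin_q$ with $\E[G^2]$ not necessarily $1$, Assumptions $\mathbf A$ and $\mathbf A^{\textbf{loc}}$ hold, so the Mecke formula and the integration-by-parts identities of \cite{DP18,DVZ18} apply without reservation. The fourth-moment analysis there (Ledoux's spectral argument combined with the Mecke identity) yields
\[
\int_\cZ\E\bigl[(D^+_zG)^4\bigr]\mu(dz)\leq (4q-3)\,\kappa_4(G),
\]
where $\kappa_4(G)=\E[G^4]-3\E[G^2]^2$. Concretely, the starting point is $\E[G^4]=\frac1q\E[G^3(-LG)]$. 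I would expand $\E[G^3\,\cdot\,]$ through the Mecke formula into terms involving $\int\E[(D^+G)^4]d\mu$ and $\E[G^2\Gamma(G,G)]$. I would then use the positivity of the fourth-moment decomposition $\E[G^4]-3\E[G^2]^2\geq \frac{1}{4q-3}\int\E[(D^+G)^4]d\mu$, which is the one established in \cite{DP18} (cf.\ the factor $\sqrt{4q-3}$ in \eqref{bdd_DP1}). I would check this identity carefully in the regular setting, since all exchanges of integrals are justified there by boundedness and finite-measure support.

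\emph{Iteration.} For $z\in\cZ$, $D^+_zG=qI_{q-1}(g(z,\cdot))$ lies in $\fin_{q-1}$. Applying the $r=1$ bound to it and integrating in $z$ gives
\[
\int\E[(D^2G)^4]\leq (4(q-1)-3)\int\kappa_4(D_zG)\,\mu(dz).
\]
To iterate, I need $\int\kappa_4(D_zG)\,\mu(dz)\leq \int\E[(D_zG)^4]\,\mu(dz)$, which is trivially true because $\kappa_4\leq\E[\cdot^4]$. This gives
\[
\int\E[(D^{r}G)^4]\leq (4(q-r+1)-3)\int\E[(D^{r-1}G)^4],
\]
and induction yields $a_{q,r}\kappa_4(G)$. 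At $r=q$, $D^qG=q!\,g$ is deterministic, which gives \eqref{intro_kernel}; the product $\prod_{j=0}^{q-1}(4(q-j)-3)$ reindexes to $\prod_{j=1}^q(4j-3)$.

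\emph{Passage to the limit.} Apply Theorem~\ref{thm_core} with $p=4$ and $d=1$ to get $F_n=\E[F|\cG_n]=I_q(P_n^{\otimes q}f)\in\fin_q$ with $F_n\to F$ in $L^4(\Omega)$, so $\kappa_4(F_n)\to\kappa_4(F)$. By Stroock's formula, $D^rF_n$ is the chaos expansion $\frac{q!}{(q-r)!}I_{q-r}\bigl((P_n^{\otimes q}f)(z_1,\dots,z_r,\cdot)\bigr)$, and this converges to $D^rF$ in $L^2(\Omega\times\cZ^r)$ since $P_n^{\otimes q}f\to f$ in $L^2$. Along a subsequence the convergence is $\PP\otimes\mu^r$-almost everywhere, so Fatou's lemma gives
\[
\int\E|D^rF|^4\leq\liminf_n a_{q,r}\kappa_4(F_n)=a_{q,r}\kappa_4(F).
\]
This shows both finiteness and \eqref{intro_der}. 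The convergence $P_n^{\otimes q}f\to f$ in $L^2$ follows because $\E[F|\cG_n]\to F$ in $L^2$, combined with the isometry.

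\emph{Main obstacle.} The real work is the $r=1$ inequality for regular $G$ with the sharp constant $4q-3$, and in particular identifying exactly which identity of \cite{DP18} delivers it with $\kappa_4(G)$ on the right when $\E[G^2]\neq1$. If that identity is stated only for unit variance, I would first rescale, since both sides are homogeneous of degree $4$.
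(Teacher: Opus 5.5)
Your proposal is correct and follows essentially the same route as the paper. You invoke \cite[Lemma~3.2]{DP18} for regular $G\in\fin_q$, iterate through $D^{r-1}G\in\fin_{q-r+1}$ using $\kappa_4(X)\leq\E[X^4]$, and pass to the limit along the martingale core with $p=4$ via $\kappa_4(F_n)\to\kappa_4(F)$, $L^2$-convergence of $D^rF_n$ from \eqref{iter_D}, an almost-everywhere subsequence, and Fatou's lemma; \eqref{intro_kernel} is then read off at $r=q$ after the limit passage.
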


Thus, fourth integrability of a Poisson multiple integral forces fourth
integrability of its kernel.  The regular core is in fact dense for
the graph norm containing the $L^4$ norms of all derivatives up to
order $q$; see Corollary \ref{cor_graph}.

The second application of Theorem \ref{thm_core}
is the anticipated Kolmogorov bound for the fourth moment theorem
under the sole assumption of finite fourth moment.

\begin{theorem}[\textsf{Kolmogorov bound}]
\label{thm_kol}
Fix an integer $q\geq 1$ and consider  $F\in\bC_q\cap L^4(\Omega)$ with $\E[ F^2] =1$.
Then,  with $N\sim\NN(0,1)$,
$
\dK(F, N) 
\leq  15.6
\sqrt{\E[ F^4] - 3}.$
 
\end{theorem}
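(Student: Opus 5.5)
The plan is to approximate $F$ by the martingale core of Theorem \ref{thm_core}, apply the Döbler--Peccati bound \eqref{bdd_DP2} along the approximation, and pass to the limit. Apply Theorem \ref{thm_core} with $d=1$, $p=4$ and $F_1=F=I_q(f)$. This gives
\[
F_n:=\E[F|\cG_n]=I_q(P_n^{\otimes q}f)\in\fin_q
\]
with $F_n\to F$ in $L^4(\Omega)$. Since $F_n\in\fin_q$, the condition \eqref{PB} holds, so $F_n$ satisfies Assumptions $\mathbf A$ and $\mathbf A^{\textbf{loc}}$.

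Step 1 is normalization. Because $F_n\to F$ in $L^2$, we have $\sigma_n^2:=\E[F_n^2]\to1$. We may discard the finitely many $n$ with $\sigma_n=0$, and set $G_n:=F_n/\sigma_n\in\bC_q$. The class \eqref{PB} is invariant under scalar multiplication, so $G_n$ still satisfies both assumptions and has unit variance. Theorem \ref{thm_DP18}(ii) then gives
\[
\dK(G_n,N)\leq 15.6\sqrt{\E[G_n^4]-3}.
\]
Moreover, $L^4$ convergence gives $\E[F_n^4]\to\E[F^4]$, hence $\E[G_n^4]-3\to\E[F^4]-3$. The latter is nonnegative, being the limit of $\kappa_4(G_n)\geq0$.

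Step 2 is passing to the limit on the left side. We have $G_n\to F$ in $L^2$, hence in law. Since $N$ has a continuous distribution function, for every fixed $t$,
\[
|\PP(F\leq t)-\Phi(t)|\leq \liminf_n |\PP(G_n\leq t)-\Phi(t)|
\]
provided $t$ is a continuity point of the law of $F$. For a general $t$, use right-continuity: pick continuity points $t_k\downarrow t$, so that $\PP(F\le t_k)\to\PP(F\le t)$ and $\Phi(t_k)\to\Phi(t)$. Taking the supremum over $t$ yields
\[
\dK(F,N)\leq\liminf_n\dK(G_n,N)\leq 15.6\sqrt{\E[F^4]-3}.
\]

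The main obstacle is already handled by Theorem \ref{thm_core}: the approximations must stay in the $q$-th chaos, satisfy the regularity condition \eqref{PB}, and converge in $L^4$, not merely in $L^2. $ With that theorem granted, the only remaining points needing care are routine. The first is that the Kolmogorov distance is lower semicontinuous under convergence in law to a limit that is compared against a continuous target law, which is what the argument above verifies. The second is that the constant $15.6$ in \eqref{bdd_DP2} does not depend on $\|G_n\|_4$, so no uniformity issue arises.
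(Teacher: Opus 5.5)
Your proposal is correct and follows the paper's proof: you approximate by the martingale core with $p=4$, apply \eqref{bdd_DP2} to the normalized regular approximants, and pass to the limit via lower semicontinuity of $\dK$. The differences are cosmetic. You apply the limit argument directly to $G_n=F_n/\sigma_n$ instead of using the triangle inequality with $\dK(\sigma_nN,N)\to0$, and you prove lower semicontinuity via continuity points and right-continuity (which does not even need continuity of $\Phi$) rather than via the Skorokhod argument of Lemma~\ref{lem_lsc}.
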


Theorem~\ref{thm_kol} removes Assumptions $\mathbf A$ and
$\mathbf{A}^{\textbf{loc}}$ from \eqref{bdd_DP2}.
The proof is given in Section \ref{SEC_24}.
We also extend \cite[Lemma 3.2]{DP18} under the sole
assumption $F\in L^4(\Omega)\cap\bC_q$, although this
extension is not needed for Theorem \ref{thm_kol}.
The finite-chaos inclusion for $FG$ also follows from
\cite[Corollary~5.3]{CP25} under the weaker assumption
$FG\in L^2(\Omega)$.
The additional point here is the constructive spectral
closure: a common finite-count filtration yields
simultaneous approximation of $(F, G)$, their product,
and the associated carr\'e-du-champ.

\begin{proposition}[\textsf{Graph closure of the fixed-chaos estimates}]
\label{prop_DP}
Let $p, q\geq1$, let $F\in L^4(\Omega)\cap \bC_p$ and $G\in L^4(\Omega)\cap \bC_q$.
Let $F_n, G_n$ be the
 martingale approximants from
Theorem~\ref{thm_core}.  Then
 $ FG\in\bigoplus_{r=0}^{p+q}\bC_r.$
Let $\Gamma(F, G)$ denote the carr\'e-du-champ operator \eqref{Gamma_def}.
Then, we have
\begin{align}
  \Gamma(F_n, G_n)&\xrightarrow[\text{in }L^2(\Omega)]{n\to+\infty}\Gamma(F,G).
  \label{gam_cvg}
\end{align}
Moreover, the following  exact identity holds:
\begin{align}
  \frac1{2p}
  \int_\cZ\E[|D_zF|^4] \,\mu(dz)
  =\frac{3}{p}\E[F^2\Gamma(F,F)]-\E[F^4].
  \label{D4_id}
\end{align}
As a result,  the  estimates in
\cite[Lemma 3.2]{DP18} are valid under 
$F\in L^4(\Omega)\cap\bC_p$.
\end{proposition}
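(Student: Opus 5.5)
We apply Theorem~\ref{thm_core} with $d=2$, $p=4$ to the pair $(F,G)$. This gives a common filtration $(\cG_n)$ and approximants $F_n=\E[F|\cG_n]=I_p(P_n^{\otimes p}f)\in\fin_p$ and $G_n=I_q(P_n^{\otimes q}g)\in\fin_q$, with $F_n\to F$ and $G_n\to G$ in $L^4(\Omega)$. All three claims are first established for the regular pair $(F_n,G_n)$, where \eqref{PB} holds and the product formula and the results of \cite{DP18} are available. They are then passed to the limit.

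\emph{Chaos inclusion.} By the product formula, $F_nG_n\in\bigoplus_{r\le p+q}\bC_r$. Hölder's inequality gives $F_nG_n\to FG$ in $L^2(\Omega)$. The space $\bigoplus_{r\le p+q}\bC_r$ is a closed subspace of $L^2(\Omega)$, so the limit $FG$ lies in it.

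\emph{Convergence of $\Gamma$.} I use the spectral form of the carr\'e-du-champ,
\[
\Gamma(F,G)=\tfrac12\bigl(L(FG)-FLG-GLF\bigr)=\tfrac12\bigl(L(FG)+(p+q)FG\bigr).
\]
This form makes sense because $FG$ has a finite chaos expansion, so $L(FG)=-\sum_r r\,J_r(FG)$ with $J_r$ the projection onto $\bC_r$. On the finite sum $\bigoplus_{r\le p+q}\bC_r$ the operator $L$ is bounded in $L^2$, with norm at most $p+q$. Hence $F_nG_n\to FG$ in $L^2$ implies $\Gamma(F_n,G_n)\to\Gamma(F,G)$ in $L^2$, which is \eqref{gam_cvg}. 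If \eqref{Gamma_def} is instead given pointwise, as $\frac12\int(D_zF D_zG+D_zF\,D_zG\cdot\ldots)$ or via Mecke, one first checks that it agrees with the spectral form for the regular pair $(F_n,G_n)$, where \cite{DP18} justifies all representations. The identification for $(F,G)$ is then made in the limit. This identification is the step I expect to be the main obstacle.

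\emph{The identity \eqref{D4_id}.} For $F_n\in\fin_p$, Assumption $\mathbf A$ holds, and \cite{DP18} (their Mecke/integration-by-parts computation) gives
\[
\tfrac1{2p}\int\E[|D_zF_n|^4]\,\mu(dz)=\tfrac3p\E[F_n^2\Gamma(F_n,F_n)]-\E[F_n^4].
\]
On the right-hand side, $\E[F_n^4]\to\E[F^4]$. Also $F_n^2\to F^2$ in $L^2$ and $\Gamma(F_n,F_n)\to\Gamma(F,F)$ in $L^2$ by the previous step, so the product of expectations converges.

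On the left-hand side, I would show that $D F_n\to DF$ in $L^4(\Omega\times\cZ)$. By Theorem~\ref{app1} applied to $F_n-F_m\in\bC_p\cap L^4$ (or first to the regular differences), the $L^4$ norm of $D(F_n-F_m)$ is controlled by $\kappa_4$-type quantities. These are not linear in the argument, however, so I would prefer a more direct route: $D_zF_n=\E[D_zF|\cG_n]$ after projecting $z$ by $P_n$, which reads $D_zF_n=P_n\bigl(\E[DF|\cG_n]\bigr)(z)$. The maps $P_n$ and $\E[\cdot|\cG_n]$ are averaging over the cells $A_{n,k}$, hence contractions on $L^4(\Omega\times\cZ)$ by Jensen's inequality. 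Since $DF\in L^4$ by Theorem~\ref{app1}, convergence in $L^4$ follows from martingale convergence combined with the density of the refined cells. This gives the left-hand side limit and hence \eqref{D4_id}.

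Finally, the estimates of \cite[Lemma 3.2]{DP18} are inequalities between the quantities $\E[F^4]$, $\E[F^2\Gamma(F,F)]$, $\int\E|DF|^4$ and $\E[\Gamma(F,F)^2]$. Each of these converges along $F_n$ by the above, so the estimates pass to the limit.
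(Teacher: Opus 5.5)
Your chaos-inclusion and $\Gamma$-convergence steps are the paper's argument. The paper describes $F_nG_n$ as a product of homogeneous Charlier polynomials rather than quoting the product formula, which amounts to the same thing. The identification issue you flag as the main obstacle does not arise, since \eqref{Gamma_def} is the spectral definition, both here and in \cite{DP18}.

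The one real divergence is how you obtain $DF_n\to DF$ in $L^4(\Omega\times\cZ)$. The paper takes the route you considered and dropped. In Corollary~\ref{cor_graph}, Theorem~\ref{app1} with $r=1$ is applied to $R_n:=F_n-F\in\bC_p\cap L^4(\Omega)$. Together with the trivial bound $\kappa_4(R_n)\le\E[R_n^4]$, this gives $\int_\cZ\E[|D_z(F_n-F)|^4]\,\mu(dz)\le(4p-3)\,\E[(F_n-F)^4]\to0$. So the nonlinearity of $\kappa_4$ is harmless, and this route even comes with a rate.

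Your alternative is the identity $DF_n=(P_n\otimes\E[\,\cdot\,|\cG_n])\,DF$, with this averaging operator a contraction on $L^4(\Omega\times\cZ)$. The identity is correct and is a nice structural observation. However, your concluding sentence is not justified as written. $\cG_\infty$ need not equal $\sigma\{\eta\}$, and $\bigcup_n\fH_n$ need not be dense in $L^4(\mu)$, because both are generated only by step approximations of the given kernels. So martingale convergence and density of the cells do not by themselves return $DF$, and the $z$-averaging also lives on a merely $\sigma$-finite space.

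The gap closes without any martingale theory:
\begin{itemize}
\item[(a)] You already have $DF_n\to DF$ in $L^2(\Omega\times\cZ)$, from the isometry and \eqref{Pn_approx}.
\item[(b)] The contraction gives $\|DF_n\|_{L^4(\Omega\times\cZ)}\le\|DF\|_{L^4(\Omega\times\cZ)}$ for all $n$.
\item[(c)] Along an a.e.\ convergent subsequence, apply Fatou's lemma to the nonnegative function $8(|D_zF_n|^4+|D_zF|^4)-|D_zF_n-D_zF|^4$. This yields $\limsup\int_\cZ\E[|D_z(F_n-F)|^4]\,\mu(dz)\le0$.
\item[(d)] The subsequence principle then gives convergence of the full sequence.
\end{itemize}

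Compared with the paper, your route uses Theorem~\ref{app1} only for $F$ itself, at the price of this extra limiting argument and with no rate. The rest of your proposal matches the paper, namely the passage to the limit on the right-hand side of \eqref{D4_id} and in the estimates of \cite[Lemma~3.2]{DP18}.
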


\section{Proofs}
\label{SEC2}

\subsection{Basic stochastic analysis on the Poisson space}
\label{SEC_21}

Recall that $\eta$ is a proper Poisson random measure on
$(\cZ,\sZ)$ with $\s$-finite control measure $\mu$; see
\cite[Corollary~3.7]{LP18}.  For $q\geq1$, let $I_q$ denote
the $q$th multiple Wiener--It\^o integral with respect to the
compensated measure $\eta-\mu$, and let $L_s^2(\mu^q)$ be the
symmetric subspace of $L^2(\mu^q)$.  We use the conventions
$I_0(c)=c$ and $L_s^2(\mu^0)=\R$.  The $q$th Poisson chaos is
\begin{align}
\bC_q
:=
\{I_q(f):f\in L_s^2(\mu^q)\},
\qquad
\bC_0=\R.
\label{def_CP}
\end{align}
Every $F\in L^2(\Omega,\s\{\eta\},\PP)$ admits the unique
Wiener--It\^o--Poisson expansion
\begin{align}
F
=
\E[F]
+
\sum_{q\geq1}I_q(f_q),
\qquad
f_q\in L_s^2(\mu^q),
\label{decomp}
\end{align}
with convergence in $L^2(\Omega)$; see
\cite[Theorem~18.10]{LP18}.

If $F=\mathfrak f(\eta)$, its add-one cost is
\[
D_z^+F
:=
\mathfrak f(\eta+\dl_z)-\mathfrak f(\eta).
\]
Higher-order differences are defined recursively by
$D^{(1)}=D^+$ and
\begin{align}
D^{(r)}_{z_1,\ldots,z_r}F
:=
D_{z_1}^+
\bigl(D^{(r-1)}_{z_2,\ldots,z_r}F\bigr),
\qquad r\geq2.
\label{def_MD}
\end{align}
For $F\in L^2(\Omega,\s\{\eta\},\PP)$, Stroock's formula gives
\begin{align}
f_q(z_1,\ldots,z_q)
=
\frac{1}{q!}
\E\bigl[D^{(q)}_{z_1,\ldots,z_q}F\bigr]
\label{for1}
\end{align}
for $\mu^q$-almost every
$(z_1,\ldots,z_q)\in\cZ^q$.

The modified Wiener--It\^o isometry reads
\begin{align}
\E[I_p(f)I_q(g)]
=
\ind_{\{p=q\}}q!
\langle f,g\rangle_{L^2(\mu^q)}.
\label{iso_mod}
\end{align}
Consequently, the map
\begin{align}
(f_q)_{q\geq0}
\longmapsto
\sum_{q\geq0}I_q(f_q)
\label{fock_iso}
\end{align}
is unitary from the symmetric Poisson Fock space
\[
\mathfrak F_\mu
:=
\left\{
(f_q)_{q\geq0}:
\sum_{q\geq0}q!\norm{f_q}_{L^2(\mu^q)}^2<\infty
\right\}
\]
onto $L^2(\Omega,\s\{\eta\},\PP)$; see
\cite[Chapter~18]{LP18} and \cite{LP11,Last16}.

For $F$ with expansion \eqref{decomp}, set
\[
\dom(D)
:=
\left\{
F:
\sum_{q\geq1}q q!
\norm{f_q}_{L^2(\mu^q)}^2<\infty
\right\}.
\]
For $F\in\dom(D)$,
\[
D_zF
:=
\sum_{q\geq1}qI_{q-1}(f_q(z,\cdot)),
\]
with convergence in
$L^2(\Omega\times\cZ,\PP\otimes\mu)$, and
$D_zF=D_z^+F$ for $\PP\otimes\mu$-almost every
$(\omega,z)$; see \cite[(19)]{DP18}.  The
Ornstein--Uhlenbeck generator is defined on
\[
\dom(L)
:=
\left\{
F:
\sum_{q\geq1}q^2q!
\norm{f_q}_{L^2(\mu^q)}^2<\infty
\right\}
\]
by
\[
LF
:=
-\sum_{q\geq1}qI_q(f_q).
\]
Finally, whenever $F,G,$ and $FG$ belong to $\dom(L)$, define
\begin{align}
\Gamma(F,G)
:=
\frac{1}{2}
\bigl(L(FG)-F\,LG-G\,LF\bigr).
\label{Gamma_def}
\end{align}
If $F\in\bC_p$ and $G\in\bC_q$, then
\begin{align}
\Gamma(F,G)
=
\frac{1}{2}
\bigl(L(FG)+(p+q)FG\bigr).
\label{Gamma_chaos}
\end{align}
We refer to \cite{Last16}, \cite[Chapter~20]{LP18}, and
\cite[Section~2]{DP18} for further details.

\subsection{Construction of the martingale core} \label{SEC_22}

We are now ready to prove Theorem \ref{thm_core}.

\begin{proof}[Proof of Theorem \ref{thm_core}]
For each $j$, finite linear combinations of indicators of measurable
rectangles
\[
B_1\times\cdots\times B_{q_j}
\quad\text{with}
\quad
\mu(B_i)<\infty,
\]
are dense in $L^2(\mu^{q_j})$.  Since symmetrization is an
$L^2$-contraction and $f_j$ is symmetric, we may therefore choose
symmetric rectangular step functions $u_{j,n}$ such that
\noi
\begin{align}
\norm{u_{j,n}-f_j}_{L^2(\mu^{q_j})}
\leq  1/n.
\label{u_approx}
\end{align}

Let $\mathcal B_n$ denote the finite collection of all coordinate
sets appearing in the rectangular representations of $u_{j,k}$,
for $1\leq j\leq d$ and $1\leq k\leq n$, and set
\[
C_n
:=
\bigcup_{B\in\mathcal B_n}B.
\]
It is clear that $\mu(C_n)<\infty$.  Let
\[
\mathcal P_n
=
\{A_{n,1},\ldots,A_{n,m_n}\}
\]
be the collection of non-null atoms of the finite algebra generated
by $\mathcal B_n$ inside $C_n$, where complements are taken relative
to $C_n$.  Thus, the sets in $\mathcal P_n$ are pairwise disjoint,
have finite positive measure, and form a partition of $C_n$ modulo
a $\mu$-null set.  Moreover, every $B\in\mathcal B_n$ is, up to a
null set, a union of cells from $\mathcal P_n$.

Since $\mathcal B_n\subseteq\mathcal B_{n+1}$,
one has

\noi
\begin{align*}
C_n
\subseteq C_{n+1}
\AND
\mathcal P_{n+1}
\text{ refines }\mathcal P_n
\text{ on }C_n,
\end{align*}
where the refinement is understood modulo $\mu$-null sets.

Set
\[
\fH_n
:=
\operatorname{span}
\{
\ind_{A_{n,1}},\ldots,
\ind_{A_{n,m_n}}
\}
\subset L^2(\mu),
\]
and let $P_n$ denote the orthogonal projection from $L^2(\mu)$ onto
$\fH_n$.  The refinement property implies that
$
\fH_n\subseteq \fH_{n+1}.
$
Furthermore, since every coordinate set occurring in $u_{j,n}$ is
a union of cells from $\mathcal P_n$, we have
$
u_{j,n}\in \fH_n^{\odot q_j},
$
where  $\fH_n^{\odot q_j}$ denotes the $q_j$-th  symmetric tensor power of $\fH_n$.
Now $P_n^{\otimes q_j}$ is the orthogonal projection from
$L^2(\mu^{q_j})$ onto $\fH_n^{\otimes q_j}$.  Since
$u_{j,n}\in \fH_n^{\odot q_j}\subseteq \fH_n^{\otimes q_j}$, the
best-approximation property of orthogonal projections, with \eqref{u_approx},
 yields
\noi
\begin{align}
\| P_n^{\otimes q_j} f_j-f_j\|_{L^2(\mu^{q_j})}
&\leq \| u_{j,n}-f_j \|_{L^2(\mu^{q_j})}
\leq
1/n.
\label{Pn_approx}
\end{align}

More explicitly, $P_n^{\otimes q_j}f_j$ has the representation
\noi
\begin{align*}
P_n^{\otimes q_j}f_j(z_1, ..., z_{q_j})
=
\sum_{r_1,\ldots,r_{q_j}=1}^{m_n}
c_{j,n}(r_1,\ldots,r_{q_j})
\prod_{\ell=1}^{q_j}
\ind_{A_{n,r_\ell}}(z_\ell)
\end{align*}

\noi
for suitable real coefficients
$c_{j,n}(r_1,\ldots,r_{q_j})$.  Consequently,
$P_n^{\otimes q_j}f_j$ is symmetric and bounded, is constant on
each product cell of $\mathcal P_n^{q_j}$, and vanishes outside
$C_n^{q_j}$.

Define $\cG_n$ by \eqref{Gn_def}.  Since every cell of
$\mathcal P_n$ is, modulo a null set, a union of cells from
$\mathcal P_{n+1}$, each count $\eta(A_{n,k})$ is a finite sum of
level-$(n+1)$ counts.  Consequently,
$
\cG_n\subseteq\cG_{n+1}.
$

For $1\leq k\leq m_n$, set
\[
N_{n,k}:=\eta(A_{n,k}),
\qquad
\lambda_{n,k}:=\mu(A_{n,k}).
\]
The random variables $N_{n,1},\ldots,N_{n,m_n}$ are independent,
with $N_{n,k}\sim\text{Poisson}(\lambda_{n,k})$.  For a
multi-index
$\alpha=(\alpha_1,\ldots,\alpha_{m_n})\in\N_0^{m_n}$ (with $\N_0=\{0,1, ... \}$), 
define
\[
C_\alpha(N_n)
:=
\prod_{k=1}^{m_n}
C_{\alpha_k}(N_{n,k};\lambda_{n,k})
\AND
|\alpha|
:=
\sum_{k=1}^{m_n}\alpha_k,
\]
where $C_\ell(\,\cdot\,;\lambda)$ denotes the  monic  Charlier
polynomial of degree $\ell$
(see \cite[Def.~6.2.7]{Pri09}). 
  The tensor-product Charlier
polynomials form a complete orthogonal system in $L^2(\cG_n)$
with
\[
C_\alpha(N_n)
=
I_{|\alpha|} \Big(\wt{\bigotimes_{k=1}^{m_n} \ind_{A_{n,k}}^{\otimes\alpha_k}} \Big).
\]
See, e.g., 
\cite[Prop.~6.2.9 and
Lem.~6.2.10]{Pri09}. 
The kernels on the right-hand side, with $|\alpha|=r$, span
$\fH_n^{\odot r}$.  Grouping the complete Charlier system according
to total degree therefore gives the orthogonal decomposition
\begin{align}
L^2(\cG_n)
=
\bigoplus_{r=0}^{\infty}
I_r(\fH_n^{\odot r}).
\label{finite_Fock}
\end{align}

\noi
Equivalently, under the Poisson Fock-space isometry \eqref{fock_iso}, the
one-particle projection $P_n$ has multiplicative second quantization
$
  \textsf{Exp}(P_n)
  :=\bigoplus_{r=0}^{\infty}P_n^{\otimes r}.
$
Its range is the Fock space over $\fH_n$, which is exactly the space in
\eqref{finite_Fock};
see, e.g., \cite[Section 5]{Surg84}.  Hence, $\textsf{Exp}(P_n)$ is the
orthogonal projection onto $L^2(\cG_n)$ and therefore
\begin{align}
  \textsf{Exp}(P_n)[X]
  =\E[X|\cG_n]
\quad  \text{for}\quad X\in L^2(\Omega,\s\{\eta\}, \PP).
  \label{Exp_cond}
\end{align}
 Restricting \eqref{Exp_cond} to the
$q_j$-th chaos gives \eqref{core_id}.

The martingale convergence theorem yields
\[
  F_{j,n}
  \xrightarrow[\text{in }L^p(\Omega)]{n\to+\infty} \E[F_j|\cG_\infty]
 \quad
 {\rm with}
 \quad
  \cG_\infty:=\sigma\Big(\bigcup_{n\geq1}\cG_n\Big).
\]
On the other hand, we deduce from \eqref{Pn_approx} that
\[
  \E[(F_{j,n}-F_j)^2]
  =q_j!\| P_n^{\otimes q_j}f_j-f_j \|_{L^2(\mu^{q_j})}^2
  \xrightarrow{n\to+\infty}0,
\]
which ensures  
$\E[F_j|\cG_\infty]=F_j$ almost surely.  This proves
\eqref{core_Lp} for each $j\in\{1, ..., d\}$.
\end{proof}

\subsection{\texorpdfstring{$L^4$-Malliavin regularity}{L4-Malliavin regularity}}
\label{SEC_23}

For $F=I_q(f)$ with $f\in L^2_s(\mu^q)$, the iterated Malliavin derivative is
\begin{align}
  D^r_{z_1,\ldots,z_r}F
  =(q)_r \, I_{q-r}\bigl(f(z_1,\ldots,z_r,\cdot)\bigr)
  \quad\text{with   $(q)_r:=\frac{q!}{(q-r)!}$}
  \label{iter_D}
\end{align}
for $\mu^r$-almost every $(z_1,\ldots,z_r)\in\cZ^r$,  $1\leq r\leq q$.
In particular,
$D^qF=q!f$ is deterministic.

\begin{proof}[Proof of Theorem \ref{app1}]
Let $F_n=I_q(f_n)$ be the approximations from
Theorem~\ref{thm_core} with $p=4$ such that $F_n\in\fin_q$ 
as in Definition~\ref{def_core}.  
We recall   from \cite[Lem.~3.2]{DP18}
that for $G\in\fin_q$ (so that $G$ satisfies   Assumption $\mathbf{A}$ therein)

\noi
\begin{align}
  \int_\cZ\E[|D_zG|^4]\,\mu(dz)
  &\leq(4q-3)\kappa_4(G).
  \label{reg_D4}
\end{align}

\noi
Applying
\eqref{reg_D4} to $F_n$ gives
\begin{align}
   \int_\cZ\E[|D_z F_n|^4]\,\mu(dz)
  \leq(4q-3)\kappa_4(F_n).
  \label{D_base_n}
\end{align}
Fix $2\leq r\leq q$.  For $\mu^{r-1}$-almost every
$(z_1,\ldots,z_{r-1})\in\cZ^{r-1}$, the random variable
\[
  D^{r-1}_{z_1, ..., z_{r-1}}F_n \in\fin_{q-r+1},
\]
since its kernel is still bounded, stepwise constant, and supported in
a finite-measure rectangle. 
 Therefore, we have 
\[
\begin{aligned}
  \int_\cZ\E\bigl[|D_y    D^{r-1}_{z_1, ..., z_{r-1}}F_n     |^4\bigr] \,\mu(dy)
  &\leq
  \bigl(4(q-r+1)-3\bigr)
  \kappa_4( D^{r-1}_{z_1, ..., z_{r-1}}F_n)\\
  &\leq
  \bigl(4(q-r+1)-3\bigr)
  \E\bigl[| D^{r-1}_{z_1, ..., z_{r-1}}F_n |^4\bigr].
\end{aligned}
\]
Integrating in $z_1, ..., z_{r-1}$ gives the recursion
\begin{align}
  M_r(F_n):&= \int_{\cZ^r}\E[|D^r_{z_1, ..., z_r} F_n|^4]\,\mu(dz_1)\ldots \mu(dz_r)
   \label{MRN} \\
  &\leq
  \bigl(4(q-r+1)-3\bigr) \int_{\cZ^{r-1}}\E[|D^{r-1}_{z_1, ..., z_{r-1}} F_n|^4]\,\mu(dz_1)\ldots \mu(dz_{r-1}).
  \notag
\end{align}
Combining the above recursion with  \eqref{D_base_n}  yields
\begin{align}
  M_r(F_n)
  \leq a_{q,r} \, \kappa_4(F_n).
  \label{D_n_bdd}
\end{align}

Since $F_n\to F$ in $L^4(\Omega)$,
\begin{align}
  \kappa_4(F_n)\longrightarrow\kappa_4(F).
  \label{kappa_cvg}
\end{align}
The  relations \eqref{iso_mod} 
and \eqref{iter_D} imply
$\E\big[ \| D^r(F_n - F) \|^2_{L^2(\mu^r)} \big] 
=(q)_r \, \E[(F_n-F)^2] \to 0$,
so that 
we can  select a subsequence along which
$D^rF_n\to D^rF$ almost everywhere on $\Omega\times \cZ^r$.  
Hence, the   bound \eqref{intro_der}
follows from 
Fatou's lemma, \eqref{D_n_bdd}, and \eqref{kappa_cvg}.
\qedhere

\end{proof}

The approximation is therefore much stronger than $L^4(\Omega)$ convergence
of the random variables alone.

\begin{corollary}[\textsf{Fourth-order graph-norm density}]
\label{cor_graph}
In the setting of Theorem~\ref{app1}, the approximants
$F_n$ can be chosen such that 

\noi
\begin{align}
  \E[|F_n-F|^4]
  +\sum_{r=1}^q
 \E\bigl[   \| D^r(F_n-F)\|^4_{L^4(\mu^r)}\bigr]
  \longrightarrow0.
  \label{graph_cvg}
\end{align}
\end{corollary}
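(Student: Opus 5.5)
The plan is to apply the estimate \eqref{intro_der} of Theorem~\ref{app1} to the differences $G_n:=F_n-F\in\bC_q\cap L^4(\Omega)$, where $F_n=\E[F|\cG_n]$ are the martingale approximants of Theorem~\ref{thm_core} with $p=4$. Since $F_n\in\fin_q$ and $F\in\bC_q\cap L^4(\Omega)$, the difference $G_n$ lies in $\bC_q\cap L^4(\Omega)$. Theorem~\ref{app1} requires only $G\in\bC_q\cap L^4(\Omega)$; the normalization $\E[G^2]=1$ is not needed, because $\kappa_4(G)=\E[G^4]-3\E[G^2]^2$ is defined for any such $G$. Applying it to $G_n$ gives, for every $1\leq r\leq q$,
\[
\E\bigl[\|D^r(F_n-F)\|_{L^4(\mu^r)}^4\bigr]
=\int_{\cZ^r}\E\bigl[|D^r_{z_1,\ldots,z_r}(F_n-F)|^4\bigr]\,\mu(dz_1)\cdots\mu(dz_r)
\leq a_{q,r}\,\kappa_4(F_n-F).
\]
The equality is Fubini, and linearity of $D^r$ on $\bC_q$ follows from \eqref{iter_D}.

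It then remains to show that $\kappa_4(F_n-F)\to0$. Bounding $\kappa_4(G)\leq\E[G^4]$, we get
\[
\kappa_4(F_n-F)\leq\E[|F_n-F|^4],
\]
and the right-hand side tends to $0$ by \eqref{core_Lp} with $p=4$. Summing over $r=1,\ldots,q$ together with the term $\E[|F_n-F|^4]$ gives \eqref{graph_cvg}, with the explicit bound
\[
\Big(1+\sum_{r=1}^q a_{q,r}\Big)\,\E[|F_n-F|^4].
\]
Each $F_n$ is taken from Theorem~\ref{thm_core}; no further selection is needed, so the phrase ``can be chosen'' only means taking these martingale approximants.

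The main obstacle is to confirm that Theorem~\ref{app1} genuinely applies to the non-regular variable $F_n-F$ and not only to elements of $\fin_q$. This is exactly what the proof of Theorem~\ref{app1} provides. It approximates an arbitrary element of $\bC_q\cap L^4(\Omega)$ by the core, and it passes to the limit in the regular estimate \eqref{D_n_bdd} using Fatou's lemma and \eqref{kappa_cvg}. Hence \eqref{intro_der} holds for $G_n$ as well. One could instead argue directly with $F_n-F_m$, both in $\fin_q$, and use Fatou as $m\to\infty$; this gives the same bound $a_{q,r}\liminf_m\E[|F_n-F_m|^4]=a_{q,r}\E[|F_n-F|^4]$ and serves as a check on the main argument.
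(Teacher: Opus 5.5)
Your proposal is correct and matches the paper's proof: apply Theorem~\ref{app1} to $R_n=F_n-F\in\bC_q\cap L^4(\Omega)$, then bound $\kappa_4(R_n)\leq\E[|R_n|^4]\to0$ using \eqref{core_Lp} with $p=4$. Your alternative via $F_n-F_m\in\fin_q$ and Fatou is also valid; it simply repeats the limiting argument already contained in the proof of Theorem~\ref{app1}.
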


\begin{proof}
Set $R_n:=F_n-F$.  Then $R_n\in\bC_q$ and
$R_n\to0$ in $L^4(\Omega)$. Using  \eqref{MRN},
we deduce from 
  Theorem~\ref{app1} that 
$
  M_r(R_n)
  \leq a_{q,r}\kappa_4(R_n)
  \leq a_{q,r}\E[|R_n|^4],
$
which proves \eqref{graph_cvg}.
\end{proof}

 \subsection{Kolmogorov bound for fourth moment theorem} \label{SEC_24}

 Let us first record an elementary lemma.

\begin{lemma}[\textsf{Lower semicontinuity of Kolmogorov distance}]
\label{lem_lsc}
Let $X_n$, $X$, and $Y$ be real-valued random variables.  If
$X_n\to X$ in distribution and the distribution function of $Y$ is
continuous, then
$\dK(X,Y)
  \leq\liminf_{n\to\infty}\dK(X_n,Y).
$
\end{lemma}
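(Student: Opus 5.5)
The plan is to exploit the continuity of the distribution function $\Phi_Y(t):=\PP(Y\leq t)$. Write $\Phi_n(t):=\PP(X_n\leq t)$ and $\Phi_X(t):=\PP(X\leq t)$. Since $X_n\to X$ in distribution, $\Phi_n(t)\to\Phi_X(t)$ at every continuity point $t$ of $\Phi_X$. The complement of this set of continuity points is at most countable, so its set of continuity points $\mathcal C$ is dense in $\R$.

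First I would handle a fixed $t\in\mathcal C$. There we have
\[
|\Phi_X(t)-\Phi_Y(t)|=\lim_{n\to\infty}|\Phi_n(t)-\Phi_Y(t)|\leq\liminf_{n\to\infty}\dK(X_n,Y),
\]
and therefore $\sup_{t\in\mathcal C}|\Phi_X(t)-\Phi_Y(t)|\leq\liminf_n\dK(X_n,Y)$.

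Next I would extend this to arbitrary $t\in\R$. Choose $t_k\in\mathcal C$ with $t_k\downarrow t$, which is possible because $\mathcal C$ is dense. Right-continuity of $\Phi_X$ gives $\Phi_X(t_k)\to\Phi_X(t)$. Continuity of $\Phi_Y$, which is the hypothesis, gives $\Phi_Y(t_k)\to\Phi_Y(t)$. Passing to the limit in the bound at $t_k$ gives the same bound at $t$, and taking the supremum over $t$ proves the claim.

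The only delicate point is this last step. The continuity assumption on $Y$ is exactly what allows the passage from $\mathcal C$ to all of $\R$ through the right limit, so no extra argument is needed there. In the application, $Y=N$ is standard normal, whose distribution function is continuous.
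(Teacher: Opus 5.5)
Your proof is correct, but it takes a different route from the paper. The paper first passes to a subsequence realizing the $\liminf$ and uses the Skorohod representation to get $X_{n_k}\to X$ almost surely. It then sandwiches $\PP(X\le x)$ between $\PP(X_{n_k}\le x\pm\eps)$, with error $\PP(|X_{n_k}-X|>\eps)$, and absorbs the $\eps$-shift using the modulus $\omega_Y(\eps)$ of the uniformly continuous distribution function of $Y$. You instead work directly with distribution functions. You use pointwise convergence on the co-countable set $\mathcal C$ of continuity points of $\Phi_X$, and then approximate an arbitrary $t$ from the right within $\mathcal C$.

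Your route is more elementary, since it needs no coupling and no subsequence. It is also more general. Because $t_k\downarrow t$, you only need right-continuity of $\Phi_Y$, and every distribution function has that. So your argument proves the lemma for arbitrary $Y$. Your closing remark that the continuity hypothesis is ``exactly what allows'' the last step is therefore inaccurate: your proof never uses it.

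In exchange for requiring continuity of $Y$'s distribution function, the paper's coupling argument gives a quantitative perturbation inequality. For any $X'$ defined on the same probability space as $X$ and any $\eps>0$, it shows $\dK(X,Y)\le\dK(X',Y)+\omega_Y(\eps)+\PP(|X'-X|>\eps)$. In the application this could be applied directly to $F_n\to F$ in $L^4(\Omega)$, without invoking Skorohod at all.
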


\begin{proof}
Choose a subsequence $(n_k)$ along which
$
\dK(X_{n_k},Y)
\to 
\liminf_{n\to\infty} \dK(X_n,Y).
$
By   Skorohod representation theorem, this subsequence and $X$ may
be realized on a common probability space so that
$X_{n_k}\to X$ almost surely.  Let $F_Y$ be the distribution function
of $Y$ and define
$\omega_Y(\eps)
  :=\sup\{   (F_Y(x+\eps)-F_Y(x-\eps)) :x\in\R \}.
  $
A continuous distribution function is uniformly continuous on $\R$,
because it has finite limits at $-\infty$ and $+\infty$.  Hence
$\omega_Y(\eps)\to0$ as $\eps\downarrow0$.

For every $x\in\R$ and $\eps>0$,
$
  \PP(X\leq x)
  \leq\PP(X_{n_k}\leq x+\eps)
  +\PP(|X_{n_k}-X|>\eps)
$
and
$
  \PP(X_{n_k}\leq x-\eps)
  \leq\PP(X\leq x)
  +\PP(|X_{n_k}-X|>\eps).
$
Consequently,
\[
\begin{aligned}
  \bigl|\PP(X\leq x)-\PP(Y\leq x)\bigr|
  \leq{}&\dK(X_{n_k},Y)+\omega_Y(\eps)
  +\PP(|X_{n_k}-X|>\eps).
\end{aligned}
\]
Taking the supremum over $x$, letting $k\to\infty$, and then 
$\eps\downarrow0$, concludes the proof. 
\end{proof}

 \begin{proof}[Proof of Theorem \ref{thm_kol}]
Let $F_n$ be the approximations of $F$ from
Theorem~\ref{thm_core} with $p=4$. 
Note that $\s_n := \sqrt{\E[ F_n^2]} > 0$ for sufficiently large $n$, 
 since $\s_n\to 1$.

 As $F_n\in\fin_q$, the bound \eqref{bdd_DP2}
 holds and 
 $
 \dK(F_n, \s_n N) = \dK( \frac{F_n}{\s_n},  N) \leq 15.6 \sqrt{\kappa_4(  \frac{F_n}{\s_n} )}.
 $
 Moreover,
 \[
 \kappa_4(F_n/\s_n)
 =
 \frac{\kappa_4(F_n)}{\s_n^4}
 \longrightarrow
 \kappa_4(F),
\AND
 \dK(\s_nN,N)\to0
 \]
 since $\s_n\to1$.
 It follows from the triangle inequality for $\dK$ and Theorem~\ref{thm_core}
 that 
 \[
 \liminf_{n\to+\infty} \dK(F_n, N) \leq 
  \liminf_{n\to+\infty} [\dK(F_n, \s_n N) +  \dK( \s_n N, N ) ] \leq 
 15.6 \sqrt{\kappa_4( F )}.
 \]
 Hence, the proof is completed by invoking Lemma \ref{lem_lsc}.
 \end{proof}

Finally, let us prove  Proposition \ref{prop_DP}.

\begin{proof}[Proof of Proposition \ref{prop_DP}]
 $F_n$ and $G_n$ are  homogeneous Charlier
polynomials of total degree $p$ and $q$, respectively.
Consequently,
\[
  F_nG_n\in  \bigoplus_{r=0}^{p+q}\fin_r.
\]
From the  $L^4(\Omega)$-convergence $(F_n, G_n)\to (F,G)$, 
one has $F_nG_n\to FG$ in $L^2(\Omega)$.  The  space $ \bigoplus_{r=0}^{p+q}\bC_r$
is a closed subspace of $L^2(\Omega)$, which proves 
$FG\in \bigoplus_{r=0}^{p+q}\bC_r.$

 By definition,  the Ornstein-Uhlenbeck generator $L$
is bounded on $ \bigoplus_{r=0}^{p+q}\bC_r$.  Therefore,
\[
  L(F_nG_n)\longrightarrow L(FG)
  \quad\text{in }L^2(\Omega),
\]
and \eqref{gam_cvg} follows from the spectral definition of the
carr\'e-du-champ; see \eqref{Gamma_def}--\eqref{Gamma_chaos}.

Corollary~\ref{cor_graph} gives
$DF_n\longrightarrow DF$
in   $L^4(\Omega\times \cZ),$
so that
\begin{align}
  \int_\cZ\E[|D_zF_n|^4] \,\mu(dz)
  \longrightarrow
  \int_\cZ\E[|D_zF|^4] \,\mu(dz).
  \label{D4_cvg}
\end{align}
For every $n$, \cite[Lemma~3.2]{DP18} gives
\[
  \frac1{2p}
  \int_\cZ\E[|D_zF_n|^4] \,\mu(dz)
  =\frac{3}{p}\E[F_n^2\Gamma(F_n,F_n)]-\E[F_n^4].
\]
Note that the relation  \eqref{gam_cvg} and $F_n\to F$ in $L^4(\Omega)$
imply the $L^1(\Omega)$-convergence of 
$  F_n^2\Gamma(F_n,F_n)
\to F^2\Gamma(F,F)$,
which together 
 with \eqref{D4_cvg} proves
\eqref{D4_id}.
\qedhere

\end{proof}


\begin{thebibliography}{99}



\bibitem[BP16]{BP16}
Bourguin, S. and Peccati, G.:
The Malliavin--Stein method on the Poisson space.
In: G. Peccati and M. Reitzner (eds.),
\emph{Stochastic Analysis for Poisson Point Processes},
Bocconi \& Springer Series, vol.~7, pp.~185--228,
Springer, Cham, 2016.

\bibitem[CGS11]{CGS11}
Chen, L.H.Y., Goldstein, L. and Shao, Q.-M.:
\emph{Normal Approximation by Stein's Method}.
Probability and Its Applications,
Springer, Heidelberg, 2011.


\bibitem[CP25]{CP25}
Cristofaro, L. and Peccati, G.:
The Poisson multiplication formula.
arXiv:2505.11389.

\bibitem[DP18a]{DP18}
D\"obler, C. and Peccati, G.:
The fourth moment theorem on the Poisson space.
\emph{Ann. Probab.} \textbf{46}, no.~4, (2018), 1878--1916.

\bibitem[DP18b]{DPF18}
D\"obler, C. and Peccati, G.:
Fourth moment theorems on the Poisson space:
analytic statements via product formulae.
\emph{Electron. Commun. Probab.} \textbf{23} (2018),
 no.~91, 12 pp.

\bibitem[DVZ18]{DVZ18}
D\"obler, C., Vidotto, A. and Zheng, G.:
Fourth moment theorems on the Poisson space in any dimension.
\emph{Electron. J. Probab.} \textbf{23} (2018),
 no.~36, 27 pp.

\bibitem[ET14]{ET14}
Eichelsbacher, P. and Th\"ale, C.:
New Berry--Esseen bounds for non-linear functionals of Poisson random
measures.
\emph{Electron. J. Probab.} \textbf{19} (2014),
  no.~102, 25 pp.

 \bibitem[LRP13]{LRP13}
Lachi\`eze-Rey, R. and Peccati, G.:
Fine Gaussian fluctuations on the Poisson space, I:
contractions, cumulants and geometric random graphs.
\emph{Electron. J. Probab.} \textbf{18} (2013),
  no.~32, 32 pp.



\bibitem[Las16]{Last16}
Last, G.:
Stochastic analysis for Poisson processes.
In: G. Peccati and M. Reitzner (eds.),
\emph{Stochastic Analysis for Poisson Point Processes},
Bocconi \& Springer Series, vol.~7, pp.~1--36,
Springer, Cham, 2016.



\bibitem[LP11]{LP11}
Last, G. and Penrose, M.D.:
Poisson process Fock space representation, chaos expansion and
covariance inequalities.
\emph{Probab. Theory Related Fields} \textbf{150} (2011), 663--690.


\bibitem[LP18]{LP18}
Last, G. and Penrose, M.:
\emph{Lectures on the Poisson Process}.
Institute of Mathematical Statistics Textbooks, vol.~7,
Cambridge University Press, Cambridge, 2018.



\bibitem[Led12]{Led12}

 Ledoux, M.:
{\it Chaos of a Markov operator and the fourth moment condition},
Ann. Probab. {\bf 40} (2012), no.~6, 2439--2459.



\bibitem[NP09]{NP09}
Nourdin, I. and Peccati, G.:
Stein's method on Wiener chaos.
\emph{Probab. Theory Related Fields}
\textbf{145}, nos.~1--2, (2009), 75--118.


\bibitem[NP12]{bluebook}
Nourdin, I. and Peccati, G.:
\emph{Normal Approximations with Malliavin Calculus:
From Stein's Method to Universality}.
Cambridge Tracts in Mathematics, vol.~192,
Cambridge University Press, Cambridge, 2012.


\bibitem[NP05]{FMT}
Nualart, D. and Peccati, G.:
Central limit theorems for sequences of multiple stochastic integrals.
\emph{Ann. Probab.} \textbf{33}, no.~1, (2005), 177--193.

\bibitem[PSTU10]{PSTU10}
Peccati, G., Sol\'e, J.L., Taqqu, M.S. and Utzet, F.:
Stein's method and normal approximation of Poisson functionals.
\emph{Ann. Probab.} \textbf{38}, no.~2, (2010), 443--478.






\bibitem[PT08]{PT08}
Peccati, G. and Taqqu, M.S.:
Central limit theorems for double Poisson integrals.
\emph{Bernoulli} \textbf{14}, no.~3, (2008), 791--821.

\bibitem[PT05]{PT05}
Peccati, G. and Tudor, C.A.:
Gaussian limits for vector-valued multiple stochastic integrals.
In: \emph{S\'eminaire de Probabilit\'es XXXVIII},
Lecture Notes in Mathematics, vol.~1857,
pp.~247--262,
Springer, Berlin, 2005.




\bibitem[PZ10]{PZ10}
Peccati, G. and Zheng, C.:
Multi-dimensional Gaussian fluctuations on the Poisson space.
\emph{Electron. J. Probab.} \textbf{15} (2010),
  no.~48, 1487--1527.

\bibitem[PZ14]{PZ14}
Peccati, G. and Zheng, C.:
Universal Gaussian fluctuations on the discrete Poisson chaos.
\emph{Bernoulli} \textbf{20}, no.~2, (2014), 697--715.

\bibitem[Pri09]{Pri09}
Privault, N.:
{\it Stochastic Analysis in Discrete and Continuous Settings:
With Normal Martingales.}
Lecture Notes in Mathematics 1982,
Springer, Berlin, 2009.

\bibitem[Sch16]{Sch16}
Schulte, M.:
Normal approximation of Poisson functionals in Kolmogorov distance.
\emph{J. Theoret. Probab.} \textbf{29} (2016), 96--117.

\bibitem[Sur84]{Surg84}
Surgailis, D.:
On multiple Poisson stochastic integrals and associated Markov
semigroups.
\emph{Probab. Math. Statist.} \textbf{3}, no.~2, (1984), 217--239.


\bibitem[Zhe18]{Zheng18}
Zheng, G.:
\emph{Recent Developments Around the Malliavin--Stein Approach---Fourth
Moment Phenomena via Exchangeable Pairs}.
Ph.D. thesis, Universit\'e du Luxembourg, 2018.

\end{thebibliography}
\end{document}